\documentclass[reqno]{amsart}

\usepackage{enumerate,amsmath,amssymb}

\setlength{\marginparwidth}{3cm}
\setlength{\oddsidemargin}{0.3cm}
\setlength{\evensidemargin}{1cm}
\setlength{\hsize} {14cm}
\setlength{\textwidth}{14cm}
\setlength{\topmargin}{0.5cm}
\setlength{\topskip}{0cm}
\setlength{\parindent}{0cm}

\newtheorem{lem}{Lemma}
\newtheorem{cor}[lem]{Corollary}
\newtheorem{defi}[lem]{Definition}
\newtheorem{thm}[lem]{Theorem}

\theoremstyle{definition}

\newtheorem{rem}[lem]{Remark}

\newcommand{\ov}{\overline}
\newcommand{\s}{\mathrm{scal\,}}
\newcommand{\Hy}{\mathbb{H}}
\newcommand{\vo}{\mathrm{dvol}}
\newcommand{\bqw}{\begin{equation*}}
\newcommand{\eqw}{\end{equation*}}
\newcommand{\bq}{\begin{equation}}
\newcommand{\eq}{\end{equation}}
\newcommand{\vol}{\mathrm{vol}}
\newcommand{\R}{\mathbb{R}}
\newcommand{\supp}{\mathrm{supp\ }}
\renewcommand{\d}{\mathrm{d}}

\begin{document}

\title{The Yamabe equation on complete manifolds with finite volume}
\author{Nadine Gro\ss e}
\subjclass[2000]{53C21, 53A30, 35R01}

\date{\today}

\keywords{Yamabe problem, scalar curvature, complete manifolds of finite volume}

\thanks{I like to thank the Hausdorffcenter in Bonn where parts of this paper were written down for its hospitality.}

\date{}

\begin{abstract}
We prove the existence of a solution of the Yamabe equation on complete manifolds with finite volume and positive Yamabe invariant. In order to circumvent the standard methods on closed manifolds which heavily rely on global (compact) Sobolev embeddings we approximate the solution by eigenfunctions of certain conformal complete metrics. 

This also gives rise to a new proof of the well-known result for closed manifolds and positive Yamabe invariant. 
\end{abstract}

\maketitle

\section{Introduction}

Yamabe examined whether a closed $n$-dimensional Riemannian manifold $(M,g)$ $(n\geq 3)$ possesses a metric $\ov{g}$ conformal to $g$ with constant scalar curvature.  His striking idea was the consideration of the so-called Yamabe invariant, see Definition \ref{Yam_variation}, which gave the possibility to view the question as a variational problem. Works from Aubin \cite{Au76}, Schoen \cite{Sch84} and Trudinger \cite{Tr} answered the question of Yamabe affirmatively. 

There are several possibilities to generalize the Yamabe problem to open (i.e. noncompact and without boundary) manifolds. 

One possibility is simply to pose the same question as Yamabe did. On open manifolds, this gives much more freedom. We want to make this more precise by comparing to the closed case:

On closed manifolds, if the Yamabe invariant $Q$ is nonpositive, then every conformal metric having constant scalar curvature $c$ and unit volume fulfills $c=Q$. In case that $Q<0$ this conformal metric is even unique. If $Q >0$ and there is a conformal metric with constant scalar curvature $c$, one see immediately that $c\geq Q$. But nevertheless, on closed manifolds in all cases a conformal metric with constant scalar curvature has the same sign as the corresponding Yamabe invariant. 

On open manifolds, this is no longer true, an easy example is given by an open ball in the Euclidean space. Its Yamabe invariant is the one of the standard sphere, but it carries conformal metrics of constant scalar curvature of all signs: The original Euclidean metric has zero scalar curvature, the spherical metric has constant positive  and the hyperbolic metric has constant negative scalar curvature. But all those metrics are conformally equivalent. 

That's why the question is often posed more restrictively. A first possibility is to fix the sign of the constant scalar curvature and/or ask additionally for completeness. This was done by many authors and many results with positive and negative answers were obtained, see for example \cite{AMcO88}, \cite{Jin88}, \cite{Zhang97}.

The second is to stick to the original Yamabe problem and ask for solutions to the Euler-Lagrange equation of the Yamabe problem, i.e. unit volume metrics with constant scalar curvature $Q$. This version was studied for example for manifolds with bounded geometry and positive scalar curvature in \cite{Kim96} using a compact exhaustion of the open manifold and for manifolds bounded geometry and positive Yamabe invariant in \cite{NG3} using weighted Sobolev embeddings. 

In this paper we consider the second type of a noncompact Yamabe problem on complete manifolds of finite volume. 

Let $(M^n,g)$ be an $n$-dimensional complete connected Riemannian manifold of finite volume and $n\geq 3$. Let $L_g=a_n\Delta_g +\s_g$ be the conformal Laplacian where $\s_g$ is the scalar curvature of the metric $g$ and $a_n=4\frac{n-1}{n-2}$.

\begin{defi}\label{Yam_variation}
The Yamabe invariant of $(M,g)$ is given by

\[ Q(M,g)=\inf\Bigg\{ Q_g(v):=\frac{\int_M vL_g v\vo_g}{\Vert v\Vert_{L^p(g)}^2}\ \Bigg|\ v\in C_c^\infty(M), v\ne 0\Bigg\} \] 

where $p=\frac{2n}{n-2}$ and $C_c^\infty(M)$ denotes the set of compactly supported real valued functions on $M$.
\end{defi}

$Q$ is conformally invariant which is seen from the conformal transformation formula of the conformal Laplacian: For $\ov{g}=f^2g$ where $f\in C_{>0}^\infty(M)$ is a smooth positive real function on $M$ we have
\bqw
L_{\ov{g}} \ov{v} =  f^{} L_g v\ {\rm where\ } \ov{v}=f^{-\frac{n-2}{2}} v.\eqw

The Yamabe invariant is given as a variational problem. Its Euler-Lagrange equation is  
\bq\label{EL-eq} L_g v= Qv^{p-1}\quad v\in H_1^2,\ \Vert v\Vert_{L^p(g)}=1.\eq

The aim of this paper is to study the existence of a smooth positive solution of \eqref{EL-eq} for complete manifolds of finite volume. 

The standard proof for the Yamabe problem on closed manifolds heavily relies on the existence of compact Sobolev embeddings. On complete open manifolds of finite volume there do not even exist continuous Sobolev embeddings $H_1^2\hookrightarrow L^p$, \cite[Lem. 3.2]{Heb}. That's why we will use a different approach by approximating the desired solution by certain eigenfunctions of conformal metrics, cp. Section \ref{Q_eigenv}.

In the standard proof on closed manifolds one uses the subcritical Yamabe problem to get solutions of differential equations that are somehow 'near' to the desired Euler-Lagrange equations. This allows to show converges of a sequence of those solutions which then serve as test functions for the critical problem. In our approach here the eigenfunctions will play the role of these subcritical solutions and we obtain

\begin{thm}\label{main}
Let $(M,g)$ be an open complete manifold of finite volume with $0<Q(M,g)< \ov{Q}(M,g)$ and $\Vert (\s_g)_-\Vert_\frac{n}{2}<\infty$ where $(\s_g)_-:=-\min \{\s, 0\}$.

Then, there exists a smooth positive solution $v\in H_1^2$ of $L_gv=Qv^{p-1}$ with $\Vert v\Vert_{L^p(g)}=1$.
\end{thm}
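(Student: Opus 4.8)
\emph{Sketch of the argument.} The plan is to mimic the classical variational proof of the Yamabe problem, with the subcritical minimisers replaced by ground states of suitable complete conformal metrics of finite volume, as furnished by Section~\ref{Q_eigenv}. First I would produce a sequence of complete conformal metrics $g_i=u_i^{p-2}g$ of unit volume, each carrying a positive $L^2(g_i)$-normalised eigenfunction $\phi_i\in H_1^2(g_i)$ of the conformal Laplacian for its lowest eigenvalue $\lambda_i=\lmm(L_{g_i})$, and with $\lambda_i\to Q(M,g)$; the hypotheses $\vol(M,g)<\infty$ and $\Vert(\s_g)_-\Vert_\frac{n}{2}<\infty$ are exactly what force $\lmm(L_{g_i})$ to be an attained eigenvalue here. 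By conformal covariance of $L$, the functions $v_i:=u_i\phi_i$ on $(M,g)$ satisfy
\bq\label{approx-eq} L_gv_i=\lambda_i\,u_i^{p-2}\,v_i,\eq
and, since $\vo_{g_i}=u_i^p\vo_g$, one has $Q_g(v_i)=Q_{g_i}(\phi_i)$ and $\Vert v_i\Vert_{L^p(g)}=\Vert\phi_i\Vert_{L^p(g_i)}$; combined with H\"older's inequality and $\lambda_i\to Q(M,g)$ this yields $Q_g(v_i)\to Q(M,g)$ and $\Vert v_i\Vert_{L^p(g)}\to1$. So $(v_i)$ is a minimising sequence for $Q_g$, and -- using $Q(M,g)>0$, H\"older against $\Vert(\s_g)_-\Vert_\frac{n}{2}$, and $\vol(M,g)<\infty$ -- it is bounded in $H_1^2(g)$.

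Next I would extract a subsequence with $v_i\rightharpoonup v$ weakly in $H_1^2(g)$, $v_i\to v$ in $L^2_{\mathrm{loc}}$ and almost everywhere, $v\geq0$. The crux is that no $L^p$-mass is lost in the limit, and this has two sources. \emph{Concentration at a point}: a bubbling/rescaling analysis shows that a concentrating subsequence would force $Q(M,g)\geq\ov{Q}(M,g)$, contradicting the strict inequality -- this is the standard Aubin-type test-function argument transplanted to our setting. \emph{Escape to infinity}: the genuinely noncompact difficulty, absent on closed manifolds. Here I would use $\vol(M,g)<\infty$ together with the way the $g_i$ were completed at the ends in Section~\ref{Q_eigenv}: the conformal factors $u_i$ are built so that $\s_{g_i}$ is controlled from below off a fixed exhaustion, and then the ground-state equation forces $\int_{M\setminus K}|v_i|^p\,\vo_g$ to be small, uniformly in $i$, once $K$ is large enough -- the estimate being driven by H\"older against $\Vert(\s_g)_-\Vert_\frac{n}{2}$ and the finiteness of the volume. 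Granting both, $\Vert v\Vert_{L^p(g)}=1$, so $v\not\equiv0$.

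Finally, weak lower semicontinuity gives $Q_g(v)\le\liminf_iQ_g(v_i)=Q(M,g)$, while $Q_g(v)\ge Q(M,g)$ holds by the definition of the Yamabe invariant together with the density of $C_c^\infty(M)$ in $H_1^2(g)$; since $\Vert v\Vert_{L^p(g)}=1$, the function $v$ is a minimiser of $Q_g$ and hence a weak $H_1^2$-solution of its Euler--Lagrange equation $L_gv=Q(M,g)v^{p-1}$ (one may also pass directly to the limit in \eqref{approx-eq}). Smoothness then follows from a Brezis--Kato/Trudinger iteration, legitimate by the $L^{n/2}$-bound on $(\s_g)_-$ since the nonlinearity is critical, and $v>0$ follows from the strong maximum principle applied to $a_n\Delta_gv+\s_gv=Q(M,g)v^{p-1}$. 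I expect the main obstacle to be the uniform non-escape of $L^p$-mass to infinity; the rest is a careful but essentially standard adaptation of the closed case, and it is precisely to make that obstacle tractable that one passes through complete conformal metrics of finite volume in Section~\ref{Q_eigenv}.
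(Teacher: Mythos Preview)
Your overall plan is close to the paper's, but there is a genuine gap in your treatment of escape to infinity, and you are missing the device the paper uses to avoid precisely that difficulty.

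\textbf{The gap.} You assert that the conformal factors are ``built so that $\s_{g_i}$ is controlled from below off a fixed exhaustion'' and that this forces uniform smallness of $\int_{M\setminus K}|v_i|^p\,\vo_g$. In the construction of Section~\ref{Q_eigenv} one has $f_i\equiv1$ outside a compact set $K_{f_i}$ that \emph{depends on $i$}; on that region $\s_{g_i}=\s_g$, which need not be bounded below (and $\s_g$ is not even bounded above when $Q>0$, as noted after Theorem~\ref{main}). So no uniform tightness of $L^p$-mass follows along the lines you sketch, and I do not see how to obtain $\Vert v\Vert_p=1$ this way.

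\textbf{What the paper does instead.} The paper does \emph{not} attempt to prove $\Vert v\Vert_p=1$ directly. It (i) proves only $v\not\equiv0$, by contradiction: assuming $v\equiv0$, one first upgrades $v_i\to0$ in $L^{s}_{\mathrm{loc}}$ ($s<p$) to $v_i\to0$ in $L^p_{\mathrm{loc}}$ using the local invariants $Q(B_{2\epsilon}(x))>Q$ of Lemma~\ref{local_comp}, and then a cutoff $\chi_R$ yields $Q\geq Q(M\setminus B_R)$ for every $R$, contradicting $Q<\ov{Q}$. It then (ii) carries the weak $L^{n/2}$-limit $f^2$ of the conformal factors $f_i^2$ as a \emph{separate} unknown. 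Passing to the limit in $L_gv_i=\mu_i f_i^2v_i$ gives $L_gv=Qf^2v$ with $\Vert f\Vert_n\le1$, and the chain
\[
Q\ \le\ \frac{\int_M vL_gv\,\vo_g}{\Vert v\Vert_p^2}\ =\ Q\,\frac{\int_M f^2v^2\,\vo_g}{\Vert v\Vert_p^2}\ \le\ Q\,\Vert f\Vert_n^2\ \le\ Q
\]
forces equality in H\"older, hence $f^2=v^{p-2}$ and $\Vert f\Vert_n=\Vert v\Vert_p=1$ simultaneously. This equality-case trick is exactly what substitutes for your missing tightness argument, and it is the main idea absent from your sketch.

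A smaller point: the reason $\mu(L_{g_i})$ is an attained eigenvalue is not the hypothesis $\Vert(\s_g)_-\Vert_{n/2}<\infty$, but the lemma in Section~\ref{Q_eigenv} showing that nonempty essential spectrum forces $Q=\ov{Q}\le0$, applied to each complete unit-volume $g_i$. The bound on $(\s_g)_-$ is used only to get the uniform $H_1^2$-bound on $v_i$.
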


$\ov{Q}(M,g)$ is the Yamabe invariant at infinity, see Definition \ref{def_lminf}, and replaces $Q(S^n)$ that appears at this point in the closed case, cf. Remark \ref{Yam_inf_closed}.

The non-existence of a continuous Sobolev embedding $H_1^2\hookrightarrow L^p$ has the following straightforward implications:
If $Q>0$, $\s_g$ cannot be bounded from above. Moreover, if $v$ is a solution as in Theorem \ref{main}, $\ov{g}=v^\frac{2n}{n-2}g$ is a metric with finite volume and constant scalar curvature and for all $v\in C_c^\infty(M)$ $\Vert v\Vert_{L^p(\ov{g})}\leq (\max\{ a_n, Q\})^{\frac{1}{2}} \Vert v\Vert_{H_1^2(\ov{g})}$. Thus, $\ov{g}$ cannot be complete.

The method used to prove Theorem \ref{main} also gives rise to a different proof for the closed case with positive Yamabe invariant, see Theorem \ref{main_closed}. 
Moreover, the method can be adapted to similar contexts, e.g one can obtain similar results for the spinorial Yamabe invariant, cf. \cite{NG6}. 

\section{Preliminaries}

In this section we collect some facts on the Yamabe invariant.

\begin{rem}\label{def_sol_Yam} On complete manifolds
 instead of taking the infimum over $C_c^\infty$ in the definition of the Yamabe invariant \ref{Yam_variation} one could as well take the infimum over $v\in L^2 \cap H^2_{1,loc}$ with $\Big| \int_M vL_g v\vo_g\Big|<\infty$. This is seen when considering $Q_g(\eta_i v)$ for suitable cut-off functions $\eta_i$ with $\eta_i\to 1$. 
\end{rem}

\begin{defi}\label{def_lminf}\ \cite{Kim00}
Let $(M,g)$ be an open $n$-dimensional manifold with a compact exhaustion $K_i$ fulfilling $K_i\subset K_{i+1}\varsubsetneq M$ and $\cup_i K_i= M$. Then the Yamabe invariant at infinity is defined as 
\[ \ov{Q}(M,g):= \lim_{i\to \infty} Q(M\setminus K_i, g).\] 
\end{defi}

Note that $Q(M\setminus K_i, g)\leq Q(M\setminus K_{i+1}, g)$ since when considering only a subset less test functions can be used in Definition \ref{Yam_variation}. Together with $Q(M,g)\leq Q(S^n)$ \cite{Schoe} where $Q(S^n)=n(n-1)\vol(S^n)^\frac{2}{n}$ is the Yamabe invariant of the sphere with the standard metric the sequence $Q(M\setminus K_i, g)$ is monotonically increasing and bounded. Thus, $\ov{Q}$ always exists and it holds $\ov{Q}(M,g)\leq Q(S^n)$. Furthermore, $\ov{Q}$ does not depend on the choice of the sequence $K_i$.

\begin{rem}\label{Yam_inf_closed}
We note that the condition $\ov{Q}(M,g)<Q(S^n)$ in Theorem \ref{main} replaces $Q(M,g)< Q(S^n)$ that appears in the closed case. This can be seen since for $p\in M$ we have 
$Q(M,g)=Q(M\setminus \{p\},g)$ \cite[Lem. 2.1]{Schoe} and $\ov{Q}(M\setminus\{p\},g)=\lim_{i\to \infty} Q(B_\epsilon(p), g) = Q(S^n)$ where $B_\epsilon(p)$ is a ball around $p$ with radius $\epsilon$.
\end{rem}

The blow-up argument in the standard proof of the Yamabe problem \cite{SY} which rules out concentration phenomena at a fixed point
shows that for fixed  $x\in M$ $Q(B_\epsilon(x),g)\to Q(\R^n,g_E)=Q(S^n, g_{st})$ as $\epsilon\to 0$. We will need the following slight generalization:

\begin{lem}\label{local_comp}
For all compact subsets $U\subset M$ and $\delta>0$ there is an $\epsilon=\epsilon(U,\delta) > 0$ such that for all $x\in U$: $Q(B_\epsilon(x),g)\geq Q(S^n)-\delta$.
\end{lem}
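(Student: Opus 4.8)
The plan is to argue by contradiction, exploiting the pointwise blow-up convergence $Q(B_\epsilon(x),g)\to Q(S^n)$ together with a compactness argument on $U$. Suppose the claim fails for some compact $U$ and some $\delta>0$. Then for every $k\in\mathbb N$ there is a point $x_k\in U$ with $Q(B_{1/k}(x_k),g)< Q(S^n)-\delta$, and by passing to a subsequence we may assume $x_k\to x_\infty\in U$. The idea is to show that a test function nearly realizing $Q(B_{r}(x_\infty),g)$ for small fixed $r$ can be transplanted, with a small multiplicative loss, into a test function supported in $B_{1/k}(x_k)$ once $k$ is large, contradicting $Q(B_{1/k}(x_k),g)< Q(S^n)-\delta$ when combined with the pointwise statement at $x_\infty$.

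First I would make the blow-up input precise: fix a point $x_\infty$ and note $Q(B_\rho(x_\infty),g)\to Q(S^n)$ as $\rho\to 0$, so choose $\rho_0$ with $Q(B_{\rho_0}(x_\infty),g)> Q(S^n)-\tfrac{\delta}{2}$. Next, I would quantify how $Q_g(v)$ changes when a fixed test function $v\in C_c^\infty(B_{\rho_0}(x_\infty))$ is pulled back by a diffeomorphism $\Phi_k$ from a neighborhood of $x_\infty$ onto a neighborhood of $x_k$ (e.g.\ in normal coordinates, $\Phi_k=\exp_{x_k}\circ\exp_{x_\infty}^{-1}$). Since $x_k\to x_\infty$, the metrics $g$ and $\Phi_k^*g$ converge in $C^1_{loc}$ on the relevant coordinate chart; hence the Dirichlet energy $\int vL_gv\,\vo_g$ and the $L^p$-norm $\Vert v\Vert_{L^p(g)}$ of the transplanted function $v\circ\Phi_k^{-1}$ differ from the original ones by a factor tending to $1$ uniformly over a $C^1$-compact family of test functions. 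Therefore, for $k$ large, $Q_g(v\circ\Phi_k^{-1})\leq Q_g(v)+\tfrac{\delta}{4}$, and choosing $v$ almost optimal for $Q(B_{\rho_0}(x_\infty),g)$ gives a test function supported in $B_{2\rho_0}(x_k)$ with Rayleigh quotient $> Q(S^n)-\tfrac{3\delta}{4}$. This is not yet supported in $B_{1/k}(x_k)$, so the final step is a rescaling: transplanting into a chart and dilating toward $x_k$ (which on small balls changes $Q_g$ by an amount controlled by $\Vert g-g_E\Vert_{C^0}$ on the shrinking ball, hence negligible) produces a test function supported in $B_{1/k}(x_k)$ with Rayleigh quotient still $> Q(S^n)-\delta$. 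This contradicts $Q(B_{1/k}(x_k),g)< Q(S^n)-\delta$.

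The main obstacle is making the transplantation estimate genuinely uniform: one needs that a \emph{single} almost-optimal test function for $B_{\rho_0}(x_\infty)$ can be moved to all nearby centers $x_k$ with a loss that goes to $0$, and simultaneously that the scaling-down step does not destroy the gain. Both are handled by the same observation — on a sufficiently small fixed coordinate ball the metric is $C^1$-close to the Euclidean one, and $Q_g$ restricted to functions supported there is within $o(1)$ of the Euclidean (hence scale-invariant) Rayleigh quotient — but one must be careful to fix the test function first and only then shrink the support, so that the perturbation constants depend only on $U$ and the chosen radius, not on $k$. Once this uniformity is in place the contradiction is immediate, and since $U$ was arbitrary the stated $\epsilon=\epsilon(U,\delta)$ exists.
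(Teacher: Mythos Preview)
Your contradiction setup is correct, but the argument breaks down at the very last step. You produce a single test function supported in $B_{1/k}(x_k)$ whose Rayleigh quotient is $> Q(S^n)-\delta$ and declare this contradicts $Q(B_{1/k}(x_k),g)< Q(S^n)-\delta$. It does not: $Q(B_{1/k}(x_k),g)$ is an \emph{infimum} of Rayleigh quotients, so exhibiting one test function with a large quotient says nothing about how small the infimum can be. The whole transplantation-and-rescaling machinery is thus aimed in the wrong direction --- to contradict a small infimum you would need a \emph{lower} bound on $Q_g(w)$ for \emph{every} $w\in C_c^\infty(B_{1/k}(x_k))$, not an upper (or lower) bound for one particular $w$.

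The repair is far simpler than what you attempted. Once $x_k\to x_\infty$ and $1/k\to 0$, for $k$ large one has $B_{1/k}(x_k)\subset B_{\rho_0}(x_\infty)$. Domain monotonicity of the Yamabe invariant (fewer test functions on the smaller set) then gives
\[
Q(B_{1/k}(x_k),g)\ \geq\ Q(B_{\rho_0}(x_\infty),g)\ >\ Q(S^n)-\tfrac{\delta}{2},
\]
immediately contradicting $Q(B_{1/k}(x_k),g)< Q(S^n)-\delta$. No transplantation, no rescaling, no uniformity issues. The paper's own proof is a variant of this: it defines $\epsilon(x)$ as the maximal radius for which $Q(B_\epsilon(x),g)\geq Q(S^n)-\delta$ and argues, via the continuous dependence of $Q$ on the metric in the $C^2$-topology, that $\epsilon(x_i)\to \epsilon(x_\infty)>0$, again a contradiction.
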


\begin{proof} Let $U$ and $\delta$ be fixed. Then for each $x\in U$ let $\epsilon(x)$ be the maximal radius such that $Q(B_\epsilon(x),g)\geq Q(S^n)-\delta$ is fulfilled. Set $\epsilon= \inf_{X\in U} \epsilon(x)$. Suppose $\epsilon=0$. Then there is a sequence $x_i\in U$ with $\epsilon(x_i)\to 0$. Since $U$ is compact, $x_i\to x\in U$. Note that on closed manifolds $Q$ depends smoothly on $g$ in the $C^2$-topology \cite[Proof of Prop. 7.2.]{Ber}. Thus, $\epsilon(x_i)\to \epsilon(x)>0$ which is a contradiction. Thus, $\epsilon>0$.
\end{proof}

\section{Nonnegative Yamabe invariants and the $L^2$-spectrum}\label{Q_eigenv}

On closed manifolds and if $Q\geq 0$, 

\[ Q(M,g)= \inf \{ \mu(L_{\ov{g}})\ |\ \ov{g}\in [g] \}\]
where $\mu(L_g)$ is the lowest eigenvalue of the conformal Laplacian $L_g$ and $[g]:= \{ \ov{g}=f^2g\ | f\in C_{>0}^\infty(M)\}$ denotes the conformal class of $g$.

On general manifolds the spectrum of $L_g$ does not only contain eigenvalues but there can be residual and continuous spectrum. Moreover, in general $L_g$ is even not essentially self-adjoint.

We consider
\[ \mu(L_g)= \inf \Bigg\{ \frac{\int_M vL_g v\vo_g}{\Vert v\Vert_{L^2(g)}^2}\ \Bigg|\ v\in C_c^\infty(M)\Bigg\}. \]

If $L_g$ is essentially self-adjoint, $\mu(L_g)$ is the minimum of the spectrum of $L_g$.

\begin{rem}\label{ess_sa}
If $Q\geq 0$ and $\vol(M,g)=1$, then $\int_M vL_g v\vo_g\geq Q\Vert v\Vert_p^2\geq Q\Vert v\Vert_2^2$, i.e. $\mu(L_g)\geq Q$ and $L_g$ is bounded from below. Then, $L_g$ is essentially self-adjoint on $C_c^\infty(M)$, \cite[Thm. 1.1]{Shu01} and possesses only eigenvalues and essential spectrum. Moreover, the spectrum is real.
\end{rem}

If it is clear from the context to which Riemannian manifold $(M,g)$ we refer, we abbreviate $\Vert . \Vert_s:=\Vert .\Vert_{L^s(g)}$.

\begin{lem}
Let $(M,g)$ be a Riemannian manifold with $Q\geq 0$. Then
\[ Q(M,g)= \inf \{ \mu(L_{\ov{g}})\ |\ \ov{g}\in [g], \vol(M,\ov{g})=1 \}.\]

If $(M,g)$ has additionally unit volume,
\[ Q(M,g)= \inf \{ \mu(L_{\ov{g}})\ |\ \ov{g}=f^2g, \vol(M,\ov{g})=1,\  \exists \rm{\ a\ compact\ subset\ } K_f\subset M: f|_{M\setminus K_f} =1 \}.\]
\end{lem}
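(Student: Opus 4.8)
First I would establish the first identity. The inequality $Q(M,g)\le \inf\{\mu(L_{\ov g})\mid \ov g\in[g],\ \vol(M,\ov g)=1\}$ is essentially the conformal invariance of the Yamabe functional combined with the remark that $\mu(L_{\ov g})\ge Q(M,\ov g)=Q(M,g)$ when $Q\ge 0$ and $\vol=1$ (Remark~\ref{ess_sa}). More precisely, for any $\ov g=f^2g$ with unit volume and any $v\in C_c^\infty$, the transformation law $L_{\ov g}\ov v=fL_gv$ with $\ov v=f^{-(n-2)/2}v$ and $\vo_{\ov g}=f^n\vo_g$ gives $\int_M \ov v L_{\ov g}\ov v\,\vo_{\ov g}=\int_M vL_gv\,\vo_g$ and $\Vert\ov v\Vert_{L^p(\ov g)}=\Vert v\Vert_{L^p(g)}$, so the Yamabe functional is conformally invariant; hence $Q(M,\ov g)=Q(M,g)$, and since $\ov g$ has unit volume, $\mu(L_{\ov g})\ge Q(M,\ov g)=Q(M,g)$. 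Taking the infimum over such $\ov g$ gives one direction. For the reverse, given $\delta>0$ pick $v\in C_c^\infty(M)$, $v\ne 0$, with $Q_g(v)\le Q(M,g)+\delta$; I may assume $v>0$ on its support region after replacing $v$ by $|v|$ and then by a smooth positive function, or — cleaner — work directly with $|v|$ and note $Q_g(|v|)\le Q_g(v)$. Then I want to build a conformal metric $\ov g$ of unit volume whose lowest eigenvalue is close to $Q_g(v)$. The obstacle is that $v$ is compactly supported and vanishes outside a compact set, so $f=v$ does not give a metric on all of $M$. The standard device is to set $\ov g=\tilde f^2 g$ where $\tilde f$ agrees with $v$ where $v$ is large and is a small positive constant elsewhere, chosen so that $\vol(M,\ov g)=1$; as the constant tends to $0$ the extra volume contribution and its effect on $\mu$ become negligible, and the Rayleigh quotient of $\ov v=\tilde f^{-(n-2)/2}v$ for $L_{\ov g}$ in $L^2(\ov g)$ is controlled by $Q_g(v)$ plus an error that vanishes. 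This simultaneously proves the \emph{second} identity, since the $\tilde f$ produced this way is exactly equal to a constant outside a compact set (and when $(M,g)$ already has unit volume one can arrange that constant to be $1$ by instead perturbing only on a thin collar — or rescale at the end).

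Let me be more careful about the second statement, which is the substantive one. Assume $\vol(M,g)=1$. The inclusion $\supseteq$: the right-hand infimum is over a subclass of the metrics appearing in the first identity (those $\ov g=f^2g$ with $f\equiv 1$ off a compact set automatically have... no — they need not have unit volume a priori, but unit volume is imposed in the set), so trivially $Q(M,g)\le\inf\{\mu(L_{\ov g})\mid \ov g=f^2g,\ \vol=1,\ f\equiv 1 \text{ off compact}\}$. The nontrivial inclusion is $\le$ in the other direction, i.e. that restricting to such $f$ does not raise the infimum. Given $v\in C_c^\infty(M)$ with $Q_g(v)\le Q(M,g)+\delta$ and $v\ge 0$ (replacing $v$ by $|v|$), set $f=v$ on $\supp v$ and $f=1$ off a slightly larger compact set, interpolating smoothly and positively in between; scale $f$ by a constant $c>0$ so that $\ov g=(cf)^2 g$ has unit volume — note $c$ is close to $1$ because the modification is supported near where $v$ already was, and... actually $v$ is small near the boundary of its support, so gluing to $1$ there is fine up to an error; then $c\to 1$ as the interpolation region shrinks only if $\int v^n$ is itself close to $1$, which need not hold. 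The correct fix: do not require $c$ close to $1$; instead note $\ov g=(cf)^2g=(cf)^2 g$ is genuinely conformal to $g$, equals $c^2 g$ (a constant multiple, hence "$\equiv$ const off compact" after noting $c^2g$ is what we get, and constants rescale trivially), and compute $\mu(L_{\ov g})$ via the Rayleigh quotient of $\ov v=(cf)^{-(n-2)/2}v$: the numerator $\int \ov v L_{\ov g}\ov v\,\vo_{\ov g}=\int vL_gv\,\vo_g$ is unchanged by conformal invariance of this pairing, while the denominator $\Vert\ov v\Vert_{L^2(\ov g)}^2=\int (cf)^{-(n-2)}v^2\cdot(cf)^n\,\vo_g=c^2\int f^2 v^2\,\vo_g=c^2\int v^4\,\vo_g$ (on $\supp v$, where $f=v$) $\ge$ something I can bound below. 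By Hölder, $\Vert v\Vert_{L^p}^2=\Vert v\Vert_{L^p}^2\le \Vert v^2\Vert_{L^{p/2}}$... I want a \emph{lower} bound on $\int v^4$; since $\vol(M,\ov g)=1$ and $\ov v$ has $\Vert \ov v\Vert_{L^p(\ov g)}=\Vert v\Vert_{L^p(g)}$, I get from Remark~\ref{ess_sa} applied to $\ov g$ directly that $\mu(L_{\ov g})\ge Q(M,\ov g)=Q(M,g)$ — but I need an \emph{upper} bound on $\mu(L_{\ov g})$ to conclude the infimum is $\le Q(M,g)+\delta$. For that, $\mu(L_{\ov g})\le \int\ov v L_{\ov g}\ov v\,\vo_{\ov g}/\Vert\ov v\Vert^2_{L^2(\ov g)}$, and using $\vol(M,\ov g)=1$, $\Vert\ov v\Vert_{L^2(\ov g)}\ge \Vert\ov v\Vert_{L^p(\ov g)}^{?}$ — wrong direction again since $p>2$ and unit volume gives $\Vert\cdot\Vert_2\le\Vert\cdot\Vert_p$.

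So the genuine difficulty, and the step I expect to be the main obstacle, is producing from a near-optimal test function $v$ a \emph{unit-volume} conformal metric, equal to a constant off a compact set, whose lowest eigenvalue is genuinely close to $Q_g(v)$ — the subtlety being that $\mu(L_{\ov g})$ is an infimum over \emph{all} of $C_c^\infty(M,\ov g)$, not just $\ov v$, so a priori $\mu(L_{\ov g})$ could be much smaller than $Q_g(v)$. But that cannot happen thanks to Remark~\ref{ess_sa}: for the unit-volume metric $\ov g$ with $Q(M,\ov g)=Q(M,g)\ge 0$ one has $\mu(L_{\ov g})\ge Q(M,g)$, pinning $\mu(L_{\ov g})$ from below for \emph{every} admissible $\ov g$. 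Thus I only need \emph{one} admissible $\ov g$ with $\mu(L_{\ov g})\le Q(M,g)+\delta$, and for this it suffices to exhibit a single test function $w\in C_c^\infty$ with $\int wL_{\ov g}w\,\vo_{\ov g}/\Vert w\Vert^2_{L^2(\ov g)}\le Q(M,g)+\delta$ — and $w=\ov v$ does it once I arrange, by choosing the off-compact constant and rescaling, that $\Vert\ov v\Vert_{L^2(\ov g)}$ is \emph{not too small relative to} $\Vert\ov v\Vert_{L^p(\ov g)}=\Vert v\Vert_{L^p(g)}$; since $\vol(M,\ov g)=1$ and $p>2$, Hölder gives $\Vert\ov v\Vert_{L^2(\ov g)}\le\Vert\ov v\Vert_{L^p(\ov g)}$, so $\int \ov v L_{\ov g}\ov v\,\vo_{\ov g}/\Vert\ov v\Vert_{L^2(\ov g)}^2\ge \int\ov v L_{\ov g}\ov v\,\vo_{\ov g}/\Vert\ov v\Vert_{L^p(\ov g)}^2=Q_g(v)$ — again the wrong sense. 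The resolution is that equality in Hölder is approached when $\ov v$ is nearly constant on its support relative to the ambient unit-volume measure; concretely, rescale the metric $\ov g$ by shrinking it (which is conformal, multiplying by a small constant) so that the bulk of the $\ov g$-volume concentrates where $\ov v$ is essentially constant, making $\Vert\ov v\Vert_{L^2(\ov g)}\approx\Vert\ov v\Vert_{L^p(\ov g)}$; then $\mu(L_{\ov g})\le Q_g(v)+o(1)\le Q(M,g)+\delta+o(1)$. Combined with the lower bound from Remark~\ref{ess_sa}, letting $\delta\to 0$ and the concentration parameter improve gives the second identity; the first then follows a fortiori, or by the same argument without the constraint $f\equiv 1$ off a compact set.
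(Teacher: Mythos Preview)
Your easy direction is fine, but the hard direction contains a real gap: you never discover the right conformal factor. You set $f=v$ on $\supp v$, which leads to $\Vert\ov v\Vert_{L^2(\ov g)}^2=c^2\int v^4$, and then you are forced into a vague ``concentration'' argument to reconcile $\Vert\cdot\Vert_2$ with $\Vert\cdot\Vert_p$. The paper's construction avoids this entirely: take a minimizing sequence $v_i\in C_c^\infty$ with $\Vert v_i\Vert_p=1$, and set
\[
g_i=\Bigl(\Vert v_i+i^{-1}\Vert_p^{-1}(v_i+i^{-1})\Bigr)^{\frac{4}{n-2}}g,
\]
i.e.\ conformal factor $f_i^2=(v_i+i^{-1})^{4/(n-2)}$ (normalized). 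The exponent $4/(n-2)=p-2$ is exactly what makes $f_i^2 v_i^2\approx v_i^p$, so that
\[
\Vert\ov v_i\Vert_{L^2(g_i)}^2=\int f_i^2v_i^2\,\vo_g\ \ge\ (1+i^{-1})^{-\frac{4}{n-2}}\int v_i^p\,\vo_g=(1+i^{-1})^{-\frac{4}{n-2}}.
\]
Since the numerator $\int \ov v_iL_{g_i}\ov v_i\,\vo_{g_i}=\int v_iL_gv_i\,\vo_g$ is unchanged, the Rayleigh quotient is bounded above by $(1+i^{-1})^{4/(n-2)}\int v_iL_gv_i\,\vo_g\to Q$. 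No H\"older trickery or concentration is needed; the point is that the Yamabe functional's $L^p$ denominator becomes the eigenvalue problem's $L^2$ denominator precisely when the conformal weight is $v^{p-2}$. Your choice $f=v$ only works in dimension $n=4$.

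For the second identity the paper's argument is also simpler than what you sketch: once you have the $g_i$ above, observe that the Rayleigh quotient of $\ov v_i$ depends only on $g_i|_{\supp v_i}$, a compact set. So deform $f_i$ outside $\supp v_i$ to equal $1$ off a slightly larger compact set, adjusting on the intermediate collar to keep total volume equal to $1$ (possible since $\vol(M,g)=1$). This costs nothing in the estimate and produces the required class of metrics.
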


If for a function $f\in C_{>0}^\infty(M)$ such a compact subset $K_f$ exists, we shortly say that $f\equiv 1$ near infinity.
The proof of the first part is the same as in the closed case. But since we are not aware of a reference we shortly give the proof.

\begin{proof} 
Without loss of generality we can assume that $g$ already has unit volume. Since $Q\geq 0$, $\int_M vL_g v\vo_g\geq 0$ for all $v\in C_c^\infty(M)$.

From Remark \ref{ess_sa} we have $\mu (L_{\ov{g}})\geq Q(M,g)$ for all conformal metrics $\ov{g}\in[g]$ with unit volume.

On the other hand, let $v_i\in C_c^\infty(M)$ be a minimizing sequence for $Q$  with $\Vert v_i\Vert_p=1$ and $\int_M v_i L_gv_i\vo_g\to Q$. Set $g_i=\left( \Vert v_i+i^{-1}\Vert_p^{-1} (v_i +i^{-1})\right)^{\frac{4}{n-2}}g$. Then, $\vol(M, g_i)= \int_M \left(\Vert v_i+i^{-1}\Vert_p^{-1} (v_i +i^{-1})\right)^p\vo_g =1$ (Note that $\Vert v_i+i^{-1}\Vert_p\leq \Vert v_i\Vert_p + i^{-1}= 1+i^{-1}$ is finite.). Moreover, 
\begin{align*}
\Vert \ov{v}_i \Vert_{L^2(g_i)}^2&= \int_M \left(\Vert v_i+i^{-1}\Vert_p^{-1} (v_i +i^{-1})\right)^\frac{4}{n-2} v_i^2\vo_g\\
&\geq \int_M (1+i^{-1})^{-\frac{4}{n-2}} (v_i +i^{-1})^\frac{4}{n-2} v_i^2\vo_g\\
&\geq (1+i^{-1})^{-\frac{4}{n-2}} \int_M  v_i^p\vo_g = (1+i^{-1})^{-\frac{4}{n-2}}.
\end{align*}

Hence, 
\[ 0\leq \mu(L_{g_i})\leq \frac{\int_M \ov{v}_iL_{g_i} \ov{v}_i\vo_{g_i}}{\Vert \ov{v}_i\Vert_{L^2(g_i)}^2 }\leq \frac{\int_M v_iL_{g} {v}_i\vo_{g}}{(1+i^{-1})^{-\frac{4}{n-2}}}\to Q(M,g)
\] as $i\to \infty$ which finishes the proof of the first claim.

Let now $(M,g)$ be complete and of finite volume and $v_i$ be the test sequence of above. Let $K_i$ be a sequence of compact subsets with $\supp v_i \subset K_i$. The value $\Vert \ov{v}_i\Vert_{L^2(g_i)}^{-2}\int_M \ov{v}_iL_{g_i} \ov{v}_i\vo_{g_i}$ only depends on the metric $g_i$ on $\supp v_i$. Thus, we can deform $g_i$ such that the conformal factor $f_i=1$ outside a compact subset $K_{f_i}$ with $K_i\subset\subset K_{f_i}\varsubsetneq M$, $f_i^2g\equiv g_i$ on $K_i$ and $\vol (f_i^2g)=1$. 

In particular, if $g$ was complete, all those $\ov{g}$ are also complete.
\end{proof}

Next we study the Yamabe invariant if essential spectrum is present.
\begin{lem} 
Let $(M,g)$ be a complete Riemannian manifold of unit volume. Let $L_g$ be essentially self-adjoint on $C_c^\infty(M)$ and let the essential spectrum of $L_g$ be non-empty. Then $Q(M,g)=\ov{Q}(M,g)\leq 0$.
\end{lem}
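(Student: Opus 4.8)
\emph{Strategy.} The inequality $Q(M,g)\le\ov{Q}(M,g)$ is automatic, since passing to the subsets $M\setminus K_i$ only removes admissible test functions in Definition \ref{Yam_variation} and hence raises the infimum; in particular the increasing sequence $Q(M\setminus K_i,g)$ stays below its limit $\ov{Q}(M,g)$. So the plan is to prove $\ov{Q}(M,g)\le 0$, and then to use that in the case of interest $Q(M,g)\ge 0$ (the situation treated in this section; recall that by Remark \ref{ess_sa} this is precisely where essential self-adjointness of $L_g$ is at our disposal): from $Q(M\setminus K_i,g)\le\ov{Q}(M,g)\le 0\le Q(M,g)$ for all $i$ one gets, letting $i\to\infty$, the reverse inequality $\ov{Q}(M,g)\le Q(M,g)$, so that $Q=\ov{Q}=0$.

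\emph{A singular Weyl sequence from the essential spectrum.} Choose $\lambda\in\sigma_{\mathrm{ess}}(L_g)$; since $L_g$ is essentially self-adjoint, $\lambda\in\R$, and by the Weyl criterion together with the decomposition principle for second order elliptic operators on a complete manifold there is a sequence $u_i\in C_c^\infty(M)$ with $\|u_i\|_{L^2(g)}=1$, $\supp u_i\subset M\setminus K_i$, and $\|(L_g-\lambda)u_i\|_{L^2(g)}\to 0$. (A Weyl sequence for $\lambda$ converges weakly to $0$, hence escapes every compact set after a cut-off; essential self-adjointness, elliptic regularity, and the density of $C_c^\infty(M)$ in the graph norm then let one arrange the members to be smooth and compactly supported in $M\setminus K_i$.) By Cauchy--Schwarz,
\[
\int_M u_iL_gu_i\,\vo_g=\lambda\,\|u_i\|_{L^2(g)}^2+\langle u_i,(L_g-\lambda)u_i\rangle_{L^2(g)}=\lambda+o(1).
\]

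\emph{Finite volume blows up the $L^p$-norms.} Because $\vol(M,g)=1<\infty$, the numbers $\epsilon_i:=\vol(M\setminus K_i,g)$ tend to $0$. Hölder's inequality on $\supp u_i\subseteq M\setminus K_i$, together with $\tfrac12-\tfrac1p=\tfrac1n$, gives $\|u_i\|_{L^2(g)}\le\|u_i\|_{L^p(g)}\,\epsilon_i^{1/n}$, whence $\|u_i\|_{L^p(g)}^2\ge\epsilon_i^{-2/n}\to\infty$. Therefore
\[
Q(M\setminus K_i,g)\ \le\ Q_g(u_i)\ =\ \frac{\int_M u_iL_gu_i\,\vo_g}{\|u_i\|_{L^p(g)}^2}\ =\ \frac{\lambda+o(1)}{\|u_i\|_{L^p(g)}^2}\ \longrightarrow\ 0 ,
\]
and since $i\mapsto Q(M\setminus K_i,g)$ increases to $\ov{Q}(M,g)$ we conclude $\ov{Q}(M,g)\le 0$.

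\emph{Expected difficulty.} The step requiring genuine care is the construction of the singular Weyl sequence with smooth, compactly supported members whose supports leave every $K_i$: this is the only place where essential self-adjointness of $L_g$ is really used (through the Weyl criterion for its self-adjoint closure and the decomposition principle), and it plays, in this finite-volume context, the role that the global compact Sobolev embeddings play on closed manifolds. Everything else --- the Hölder estimate, the identity $\tfrac12-\tfrac1p=\tfrac1n$, and taking the limit --- is routine.
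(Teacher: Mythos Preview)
Your argument is correct and follows essentially the same route as the paper's proof: pick $\lambda\in\sigma_{\mathrm{ess}}(L_g)$, produce a singular Weyl sequence $u_i\in C_c^\infty(M\setminus K_i)$ with $\|u_i\|_2=1$ and $\|(L_g-\lambda)u_i\|_2\to0$, use H\"older on the shrinking tails (with $\tfrac12-\tfrac1p=\tfrac1n$) to force $\|u_i\|_p\to\infty$, and conclude $Q(M\setminus K_i)\le Q_g(u_i)\to 0$, hence $\ov{Q}\le 0$. The only cosmetic difference is that you write $\int u_iL_gu_i=\lambda+o(1)$ directly, whereas the paper bounds the numerator via Cauchy--Schwarz by $\|L_gu_i\|_2\le |\lambda|+o(1)$; both yield the same limit. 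Your explicit remark that the equality $Q=\ov{Q}$ (rather than merely $Q\le\ov{Q}\le 0$) uses the standing hypothesis $Q\ge 0$ of this section is a helpful clarification that the paper leaves implicit.
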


\begin{proof}
Let $\mu$ be in the essential spectrum of $L_g$. Then there is a sequence $v_i\in C_c^\infty(M\setminus B_i)$ where $B_i$ a ball with radius $i$ around a fixed point $z\in M$ such that $\Vert (L_g-\mu)v_i\Vert_2\to 0$, $\Vert v_i\Vert_2=1$ and $v_i \to 0$ weakly in $L^2$.
Then, using $1=\Vert v_i\Vert_2\leq \Vert v_i\Vert_p\vol(M\setminus B_i)^\frac{2}{n}$ and, thus, $\Vert v_i\Vert_p\geq 1$ we estimate
\begin{align*}
Q(M&\setminus B_i)\leq\frac{\int_M v_iL_gv_i\vo_g}{\Vert v_i\Vert_p^2}\leq \frac{\Vert L_gv_i\Vert_2\Vert v_i\Vert_2}{\Vert v_i\Vert_p^2}\leq
\frac{(\Vert (L_g-\mu)v_i\Vert_2 +|\mu|)\Vert v_i\Vert_2}{\Vert v_i\Vert_p^2}\\
&\leq
\frac{(\Vert (L_g-\mu)v_i\Vert_2 +|\mu|)\Vert v_i\Vert_p \vol (M\setminus B_i)^{\frac{2}{n}}}{\Vert v_i\Vert_p^2}\leq
(\Vert (L_g-\mu)v_i\Vert_2 +|\mu|)
\vol (M\setminus B_i)^{\frac{2}{n}}
\end{align*}
where the right hand-side goes to zero as $i\to \infty$.
\end{proof}

From that and Remark \ref{ess_sa} it follows directly
\begin{cor} \label{eig_seq}
If $(M,g)$ is a complete Riemannian manifold of unit volume with $Q(M,g)> 0$ or $\overline{Q}(M,g)>Q(M,g)=0$, there exists a sequence of $g_i=f_i^2g$ with $f_i\in C^\infty_{>0}(M)$, $\int_M f_i^n\vo_g=1$ and eigenvalues $\mu_i:=\mu (L_{g_i})\to Q$ as $i\to \infty$.
\end{cor}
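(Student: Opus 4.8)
The plan is to read off the statement from the two preceding lemmas and Remark \ref{ess_sa}, which is why the author says it ``follows directly''. Since $Q(M,g)\ge 0$ in both cases and $\vol(M,g)=1$, the second part of the Lemma characterising $Q(M,g)$ as $\inf\{\mu(L_{\ov g})\}$ supplies a sequence of conformal metrics $\ov g_i=f_i^2g$ with $f_i\in C_{>0}^\infty(M)$, each $f_i$ equal to $1$ near infinity, $\vol(M,\ov g_i)=1$ (equivalently $\int_M f_i^n\vo_g=1$), and $\mu(L_{\ov g_i})\to Q$ as $i\to\infty$. Renaming $\ov g_i$ as $g_i$, the only remaining task is to verify that $\mu_i:=\mu(L_{g_i})$ is genuinely an eigenvalue of $L_{g_i}$ and not merely a point of its essential spectrum.

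To see this, note first that since $Q$ is conformally invariant and $\vol(M,g_i)=1$, Remark \ref{ess_sa} applies verbatim to $g_i$: $L_{g_i}$ is bounded below, essentially self-adjoint on $C_c^\infty(M)$, and its spectrum is the union of eigenvalues and essential spectrum, with $\mu_i\ge Q$. If the essential spectrum of $L_{g_i}$ is empty, then the spectrum is discrete and its infimum $\mu_i$ is attained as an eigenvalue, so we are done. Suppose instead that, for some $i$, the essential spectrum of $L_{g_i}$ is non-empty. Then the Lemma on manifolds with non-empty essential spectrum forces $Q(M,g_i)=\ov Q(M,g_i)\le 0$. Here the choice $f_i\equiv 1$ near infinity is what makes the argument work: $g$ and $g_i$ coincide outside a compact set, so $\ov Q(M,g_i)=\ov Q(M,g)$ by Definition \ref{def_lminf}, while $Q(M,g_i)=Q(M,g)$ by conformal invariance. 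In the first case this contradicts $Q(M,g)>0$; in the second it gives $\ov Q(M,g)\le 0=Q(M,g)<\ov Q(M,g)$, again a contradiction. Hence the essential spectrum of every $L_{g_i}$ is empty and $\mu_i$ is an eigenvalue.

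The step that needs genuine care --- and the reason one invokes the sharper, second part of the first Lemma rather than just its first part --- is exactly this last one: ruling out essential spectrum requires that passing from $g$ to $g_i$ does not push the Yamabe invariant at infinity down to a nonpositive value, and this is guaranteed precisely because $f_i$ may be taken to equal $1$ near infinity, so that $\ov Q$ is literally unchanged. Everything else is bookkeeping with conformal invariance and Remark \ref{ess_sa}.
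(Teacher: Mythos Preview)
Your proposal is correct and is precisely the unpacking of what the paper means by ``follows directly'' from the two preceding lemmas and Remark \ref{ess_sa}. One small refinement: the identity $\ov Q(M,g_i)=\ov Q(M,g)$ already holds by conformal invariance of $\ov Q$, so the genuinely indispensable role of $f_i\equiv 1$ near infinity is rather that it makes $(M,g_i)$ complete, which is what lets you invoke Remark \ref{ess_sa} (essential self-adjointness via \cite{Shu01}) and the essential-spectrum lemma for $g_i$ in the first place.
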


\section{Proof of Theorem \ref{main}}\label{proof_of_main}

From Corollary \ref{eig_seq} we have: If $Q>0$, there exists a sequence of eigenfunctions $\ov{v}_i$ with $L_{{g}_i}\ov{v}_i=\mu_i\ov{v}_i$ and $\int_M |\ov{v}_i|^2\vo_{g_i}=1$. $\ov{v}_i$ is eigenfunction to the lowest eigenvalue $\mu_i$ of $g_i=f_i^2g$ ($f_i=1$ near infinity)  and, hence, positive. Viewing these equation w.r.t. the reference metric $g$ we obtain the following setting

\bqw
 L_g v_i =\mu_i f_i^2v_i\quad \int_M f_i^2v_i^2\vo_g=1, \int_M f_i^n\vo_g=1, \mu_i\searrow Q, v_i>0.\eqw

Firstly we note that $\int_M |\ov{v_i}|^2\vo_{g_i}=\int_M f_i^2v_i^2\vo_{g_i}=1$ and $f_i=1$ outside a compact subset implies $v_i\in L^2(g)$. 

Moreover, due to Remark \ref{def_sol_Yam} $v_i$ can serve as a test function for $Q$ and, thus, $Q\Vert v_i\Vert_p^2 \leq \int_M v_iL_gv_i\vo_g=\mu_i$. Since $Q>0$ $v_i\in L^p(g)$ and if then $i\to \infty$, we obtain $\Vert v_i\Vert_p\to 1$. Thus, $v_i$ is uniformly bounded in $L^p(g)$ and, due to the finite volume, also in $L^2(g)$. 

From 
\begin{align*}
\mu_i&=\int_M v_iL_gv_i\vo_g = a_n\Vert \d v_i\Vert_2^2+\int_M \s_g v_i^2\vo_g\\
&\geq a_n\Vert \d v_i\Vert_2^2-\int_M (\s_g)_- v_i^2\vo_g\geq a_n\Vert \d v_i\Vert_2^2-\Vert(\s_g)_-\Vert_{\frac{n}{2}} \Vert v_i\Vert_p^2
\end{align*}
 and the assumption that $\Vert(\s_g)_-\Vert_{\frac{n}{2}}<\infty$  we see that $\Vert \d v_i\Vert_2$ is also uniformly bounded. Summarizing $v_i$ is uniformly bounded in $H_1^2$ and, hence, $v_i\to v\geq 0$ weakly in $H_1^2$ and in $L^p$. Moreover, $\int_M f_i^n\vo_g=1$ implies that there is $f\in L^{n}$ such that $f_i^2\to f^2$ weakly in $L^\frac{n}{2}$.

\begin{lem}\label{lem1} Let $f_i^2\to f^2$ weakly in $L^\frac{n}{2}$ and $v_i\to v$ weakly in $H_1^2$.
\begin{itemize}
\item[i)] Then $f_i^2v_i\to f^2v$ in $L^s(U)$ for all compact subsets $U\subset M$ and $1< s < q=\frac{2n}{n+2}$. 
\item[ii)] If additionally $L_gv_i=\mu_if_i^2v_i$ and $\mu_i\to Q$, $f$ and $v$ weakly fulfill $L_gv=Qf^2v$.
\end{itemize}
\end{lem}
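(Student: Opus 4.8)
The plan is to localize everything to the fixed compact piece $U$, where the Sobolev embedding is compact, and then to control the product by Hölder's inequality. First I would fix $s$ with $1<s<q$ and introduce the companion exponent $r$ defined by $\frac1s=\frac2n+\frac1r$. A short computation shows that $1<s<q=\frac{2n}{n+2}$ is equivalent to $\frac1p<\frac1r<\frac{n-2}{n}$, i.e. to $\frac{n}{n-2}<r<p$; in particular $1<r<p$, so by Rellich--Kondrachov the weak convergence $v_i\rightharpoonup v$ in $H_1^2$ upgrades, on $U$, to strong convergence $v_i\to v$ in $L^r(U)$ (and, passing to a subsequence, $v_i\to v$ almost everywhere on $U$).

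Next I would split the product into a ``good'' and a ``bad'' part,
\bqw
f_i^2v_i-f^2v=f_i^2(v_i-v)+(f_i^2-f^2)v,
\eqw
and treat the two summands separately in $L^s(U)$. For the first summand Hölder's inequality with the triple $\frac1s=\frac2n+\frac1r$ gives
\bqw
\Vert f_i^2(v_i-v)\Vert_{L^s(U)}\le \Vert f_i^2\Vert_{L^{n/2}(U)}\,\Vert v_i-v\Vert_{L^r(U)} .
\eqw
Since $f_i^2\rightharpoonup f^2$ weakly in $L^{n/2}$, the sequence $\Vert f_i^2\Vert_{L^{n/2}(U)}$ is bounded, while $\Vert v_i-v\Vert_{L^r(U)}\to0$ by the previous step. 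Hence the first summand tends to $0$ strongly in $L^s(U)$.

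The heart of the matter is the second summand $(f_i^2-f^2)v$, and this is where I expect the real difficulty to sit. Weak $L^{n/2}$-convergence of $f_i^2$ together with the \emph{fixed} factor $v\in L^p(U)$ only yields weak convergence $(f_i^2-f^2)v\rightharpoonup0$ in $L^s(U)$: the map $w\mapsto wv$ is bounded linear from $L^{n/2}(U)$ to $L^s(U)$ and hence merely weak-to-weak continuous, so oscillations of $f_i^2$ are not damped by this term. To reach the asserted \emph{strong} convergence one therefore has to upgrade the convergence of the conformal factors. Along a subsequence I would use $f_i^2\to f^2$ almost everywhere on $U$, whence $(f_i^2-f^2)v\to0$ a.e.; combining this with the uniform bound $\Vert (f_i^2-f^2)v\Vert_{L^q(U)}\le C$ (again Hölder, since $\frac1q=\frac2n+\frac1p$ and $v\in L^p(U)$) and with $q>s$, the $s$-th powers $|(f_i^2-f^2)v|^s$ are equi-integrable on $U$, so Vitali's convergence theorem gives $(f_i^2-f^2)v\to0$ strongly in $L^s(U)$. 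Adding the two estimates proves i). Part ii) then follows by testing $L_gv_i=\mu_if_i^2v_i$ against $\phi\in C_c^\infty(M)$ and passing to the limit, using $v_i\rightharpoonup v$ in $H_1^2$, $\mu_i\to Q$, and the $L^s$-convergence of $f_i^2v_i$ on $\supp\phi$ just established.

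A final word on the obstacle: the equi-integrability step is routine, so the entire argument hinges on the almost-everywhere (equivalently, local strong $L^{n/2}$) convergence of $f_i^2$. This is precisely the point that does \emph{not} follow from weak $L^{n/2}$-convergence alone, and the oscillating example $f_i^2=1+\sin(i\,\cdot\,)$ shows it cannot be dispensed with; securing a genuine local compactness input for the conformal factors is the one delicate ingredient of the proof.
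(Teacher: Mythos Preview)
Your diagnosis of the ``obstacle'' is correct, and your counterexample $f_i^2=1+\sin(i\,\cdot)$ is on point: strong $L^s$-convergence of $(f_i^2-f^2)v$ genuinely does \emph{not} follow from the stated hypotheses, and the almost-everywhere convergence of $f_i^2$ that you invoke to repair this is simply not available. But the obstacle disappears once you look at the paper's own proof: despite the phrasing of the statement, what is actually established (and all that is ever used) is \emph{weak} convergence $f_i^2v_i\rightharpoonup f^2v$ in $L^s(U)$. The paper's argument begins by fixing a test function $w\in L^{s^*}(U)$ and shows $\int_U(f_i^2v_i-f^2v)\,w\,\vo_g\to0$, splitting exactly as you do into $f_i^2(v_i-v)$ and $(f_i^2-f^2)v$. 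For the first piece one uses Rellich--Kondrachov plus H\"older, as you did; for the second the paper verifies that $vw\in L^{n/(n-2)}(U)$, the dual of $L^{n/2}$, so weak convergence $f_i^2\rightharpoonup f^2$ in $L^{n/2}$ kills $\int_U(f_i^2-f^2)vw\,\vo_g$. This is precisely the weak-to-weak continuity of multiplication by $v$ that you already noted.

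Weak convergence is all that is needed downstream. In part~ii) one pairs against $w\in C_c^\infty(M)$, and $\mu_i\int_M(f_i^2v_i-f^2v)w\,\vo_g\to0$ requires only weak $L^s$-convergence on $\supp w$; the subsequent non-vanishing lemma likewise needs nothing more. So drop the Vitali step and the unjustified passage to an a.e.-convergent subsequence: your decomposition and H\"older exponents coincide with the paper's, and once the target is read as weak convergence, the argument you already have is complete.
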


\begin{proof}
i) We fix $w\in L^{s^*}(U)$ with $\frac{1}{s}+\frac{1}{s^*}=1$.
Then 

\[ \Bigg|\int_U (f_i^2v_i-f^2v)w\vo_g\Bigg|\leq \int_U |f_i^2-f^2|\,|vw|\vo_g + \int_U f_i^2|v_i-v|\,|w|\vo_g\]
The weak convergence $v_i\to v$ in $H_1^2$ implies strong convergence $v_i\to v$ on $L^{p'}(U)$ for all $1\leq p'<p$.  We choose $q'$ such that $p>q'>\frac{n}{n-2}$. Then H\"older inequality implies 
\[\Vert vw\Vert_{L^\frac{n}{n-2}(U)}\leq \Vert v\Vert_{L^{q'}(U)} \Vert w\Vert_{L^{\frac{nq'}{(n-2)q'-n}}(U)}<\infty\] if $\frac{nq'}{(n-2)q'-n}\leq s^*$. The choice of $q'$ implies $p<\frac{nq'}{(n-2)q'-n}<\infty$ and, thus, $p < s^*<\infty$ and $1<s<q$.

Hence, $  \int_U |f_i^2-f^2|\, |vw|\vo_g \to 0$ as $i\to \infty$. For the second summand of the above inequality we have
\begin{align*}
\int_U f_i^2|v_i-v|\,|w|\vo_g&\leq \Vert v_i-v\Vert_{L^{q'}(U)} \Vert f_i^2w\Vert_{L^{\frac{q'}{q'-1}}(U)}\\
&\leq  \Vert v_i-v\Vert_{L^{q'}(U)} \Vert f_i\Vert_{L^n(U)}^{2} \Vert w\Vert_{L^{\frac{nq'}{(n-2)q'-n}}(U)} \leq  \Vert v_i-v\Vert_{L^{q'}(U)} \Vert w\Vert_{L^{s}(U)} \\
& \to 0\quad {\rm as\ } i\to \infty.
\end{align*}

ii) Let $w\in C_c^\infty(M)$.
\begin{align*}
\left|\int_M (L_gv-Qf^2v)w\vo_g\right|&=\left|\int_M (L_gv-L_gv_i+\mu_if_i^2v_i-Qf^2v)w\vo_g\right|\\
&\leq a_n\left|\int_M  (\d v-\d v_i)\d w\vo_g\right| + 
\left|\int_M  ( v-v_i)(\s_g w)\vo_g\right|\\
&\phantom{=} +\mu_i \left|\int_M (f_i^2v_i-f^2v)w\vo_g\right|+|Q-\mu_i|\int_M f^2|vw|\vo_g
\end{align*}
All summands on the right-hand side tend to zero as $i\to\infty$ since $v_i\to v$ weakly in $H_1^2$ (note that $\s_g w\in C_c^\infty \subset L^2$), part i) and $\mu_i\to Q$.
\end{proof}

In order to finish the proof of Theorem \ref{main} it remains to show that $f^2=v^{p-2}$ and $\Vert v\Vert_p=1$. We start with a non-vanishing result.

\begin{lem}\label{lem2} In the setting of Lemma \ref{lem1} and assuming $0<Q(M,g)<Q(S^n)$, $v$ does not vanish identically.
\end{lem}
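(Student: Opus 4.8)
The plan is to argue by contradiction: assume $v\equiv 0$ and derive a violation of the hypothesis $Q(M,g)<Q(S^n)$. If $v\equiv 0$, then $v_i\to 0$ strongly in $L^2_{loc}$ and in $L^{p'}_{loc}$ for every $1\le p'<p$, while $\Vert v_i\Vert_p\to 1$. The mass of $|v_i|^p$ therefore cannot stay in a fixed compact set; either it escapes to infinity or it concentrates. Since $v\equiv 0$, Lemma~\ref{lem1}ii) gives $L_gv=Qf^2v$ trivially, so no information is lost there — the real input must be a local Sobolev inequality with the sharp constant $Q(S^n)$ together with a concentration–compactness dichotomy. Concretely, I would first rule out escape of mass to infinity using the finite volume and the hypothesis $\ov{Q}(M,g)\le Q(S^n)$ is \emph{not} enough here (we only assumed $Q<Q(S^n)$); instead I expect the argument to run purely locally, exploiting that on any fixed ball $B$ the test functions $\eta v_i$ (with $\eta\in C_c^\infty$) satisfy, for large $i$,
\[
Q(B_\epsilon(x),g)\,\Vert \eta v_i\Vert_p^2\ \le\ \int_M (\eta v_i)L_g(\eta v_i)\,\vo_g,
\]
and the right-hand side is, up to error terms that vanish by the strong $L^2_{loc}$ convergence $v_i\to0$, controlled by $\mu_i\int_M\eta^2 f_i^2 v_i^2\,\vo_g\le \mu_i\to Q$.

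The key steps, in order. First, fix $\delta>0$ with $Q+ \delta< Q(S^n)-\delta$ (possible since $Q<Q(S^n)$), and use Lemma~\ref{local_comp} to get $\epsilon>0$ such that $Q(B_\epsilon(x),g)\ge Q(S^n)-\delta$ for all $x$ in a chosen compact set. Second, cover this compact set by finitely many balls $B_{\epsilon/2}(x_j)$ and pick a subordinate partition of unity $\{\eta_j\}$; on each ball, expand $\int (\eta_j v_i)L_g(\eta_j v_i)$ and integrate by parts to compare it with $\eta_j^2\,v_i L_g v_i = \mu_i\,\eta_j^2 f_i^2 v_i^2$, the discrepancy being $a_n\int |\d\eta_j|^2 v_i^2$ plus cross terms $a_n\int \eta_j\,\d\eta_j\cdot\d(v_i^2)$, all of which are bounded by $C\Vert v_i\Vert_{L^2(\supp\d\eta_j)}^2\to0$ after one more integration by parts and the $L^2_{loc}\to0$ convergence (using that $\Vert\d v_i\Vert_2$ stays bounded). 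Third, combine: $(Q(S^n)-\delta)\Vert\eta_j v_i\Vert_p^2\le \mu_i\int \eta_j^2 f_i^2 v_i^2\,\vo_g+o(1)$. Summing a suitable power over $j$ (using that $\sum_j\eta_j^2\le 1$ after arranging the partition so that $\sum_j\eta_j^{\,?}$ dominates $|v_i|$ on the chosen region) and then letting the compact region exhaust $M$ forces $\limsup_i\Vert v_i\Vert_p^2\le \tfrac{Q}{Q(S^n)-\delta}<1$, contradicting $\Vert v_i\Vert_p\to1$.

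I expect two points to require care. The main obstacle is the bookkeeping in the concentration–compactness step: one must genuinely rule out that a fixed fraction of the $L^p$-mass of $v_i$ concentrates at a \emph{single} point, and the clean way is to note that concentration at a point $x$ would give, in the rescaled limit, a nontrivial solution on $\R^n$ whose energy realizes $Q(B_\epsilon(x),g)\le Q(S^n)$-mass at cost $\le Q<Q(S^n)$ — impossible by the sharp Sobolev/Yamabe inequality on $\R^n$. The second delicate point is the escape of mass to infinity: since we have only finite volume and $\Vert(\s_g)_-\Vert_{n/2}<\infty$ but \emph{not} $\ov{Q}<Q(S^n)$ in this lemma, I would handle the tail by the same ball-wise estimate applied to $M\setminus K_i$, where $Q(M\setminus K_i,g)$ is large (close to $\ov{Q}\le Q(S^n)$), together with $\vol(M\setminus K_i)\to0$, which makes the tail contribution to $\Vert v_i\Vert_p^2$ negligible in the limit. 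Tying these together, the only remaining possibility is that a positive fraction of the mass stays in a fixed compact set, which by the first mechanism and $Q<Q(S^n)$ is again excluded; hence $v\not\equiv0$.
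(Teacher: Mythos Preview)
Your local step is essentially correct and matches the paper: cut off $v_i$ on small balls, use $\eta_{\epsilon,x}v_i$ as a test function for $Q(B_{2\epsilon}(x),g)$, and exploit that the commutator term $a_n\int |\d\eta|^2 v_i^2$ vanishes because $v_i\to 0$ strongly in $L^2_{loc}$. The paper, however, does not sum over a partition of unity (your ``suitable power'' step is left undefined and is the usual place such arguments go wrong); instead it argues directly by contradiction on a single compact set $U$: if $\Vert v_i\Vert_{L^p(U)}\geq C(U)>0$ along a subsequence, then the estimate on one ball gives $Q(B_{2\epsilon}(x),g)\leq \mu_i + \tfrac{4a_n}{\epsilon^2 C(U)^2}\int_{B_{2\epsilon}(x)}v_i^2\to Q$, contradicting $Q(B_{2\epsilon}(x),g)>Q$ from Lemma~\ref{local_comp}. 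This yields $\Vert v_i\Vert_{L^p(U)}\to 0$ for every compact $U$, which is the correct conclusion of the local part.

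The genuine gap is your treatment of escape to infinity. The claim that ``$\vol(M\setminus K_i)\to 0$ \dots\ makes the tail contribution to $\Vert v_i\Vert_p^2$ negligible'' is false: small volume gives no upper bound on an $L^p$ norm. Once $\Vert v_i\Vert_{L^p(U)}\to 0$ on compacta while $\Vert v_i\Vert_p\to 1$, \emph{all} the $L^p$-mass escapes, and nothing in the hypothesis $Q<Q(S^n)$ alone prevents this. The paper in fact uses the additional standing hypothesis $\ov{Q}(M,g)>Q(M,g)$ from Theorem~\ref{main} (the phrase ``in the setting of Lemma~\ref{lem1}'' carries it, even though the displayed assumption of Lemma~\ref{lem2} only says $Q<Q(S^n)$). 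With a cut-off $\chi_R$ equal to $1$ on $M\setminus B_{2R}$ and $0$ on $B_R$, one computes
\[
Q=\lim_{i\to\infty}\int_M v_iL_gv_i\,\vo_g \;\geq\; \lim_{i\to\infty}\Big(Q(M\setminus B_R,g)\,\Vert \chi_R v_i\Vert_p^2 \;-\; \tfrac{4a_n}{R^2}\Vert v_i\Vert_{L^2(B_{2R})}^2 \;+\; \mu_i\!\int_{B_R} f_i^2v_i^2\,\vo_g\Big),
\]
and since $\Vert v_i\Vert_{L^p(B_{2R})}\to 0$, $\Vert v_i\Vert_{L^2(B_{2R})}\to 0$ and $\int_{B_R}f_i^2v_i^2\leq \Vert v_i\Vert_{L^p(B_R)}^2\to 0$, the right side tends to $Q(M\setminus B_R,g)$. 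Hence $Q\geq Q(M\setminus B_R,g)$ for all $R$, so $Q\geq \ov{Q}$, contradicting $Q<\ov{Q}$. Your proposed substitutes (concentration at a point via a rescaling/blow-up, or $\ov{Q}\leq Q(S^n)$ combined with finite volume) do not close this step; the strict inequality $Q<\ov{Q}$ is what is actually used.
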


\begin{proof} We prove by contradiction and  assume $v\equiv 0$ and, hence, $\int_U v_i^{s}\vo_g\to 0$ for all compact subsets $U\subset M$ and $1\leq s<p$.

Firstly, we want to show that then also $\int_U v_i^{p}\vo_g\to 0$ for all compact subsets $U\subset M$. For that, we assume the contrary, i.e. $\Vert v_i\Vert_{L^p(U)}> C(U)>0$ and consider small balls $B_{2\epsilon}(x)$ with $x\in M$. We choose $\epsilon$ small enough such that for all $x\in U$
$Q(B_{2\epsilon}(x))> Q(M,g)$. Due to $Q(S^n)>Q(M,g)$ and Lemma \ref{local_comp} this is always possible. Then we cover $U$ by finitely many of those balls  $B_{2\epsilon}(x)$ and define smooth cut-off functions $\eta_{\epsilon,x}$ compactly supported in $B_{2\epsilon} (x)$ that are $1$ on $B_\epsilon (x)$ and $|\d \eta_{\epsilon,x}|\leq 2\epsilon^{-1}$. Then we 
estimate 

\begin{align*}
Q(B_{2\epsilon}(x),g)&\leq \frac{\int_{B_{2\epsilon}} \eta_{\epsilon,x} v_i L_g(\eta_{\epsilon,x} v_i) \vo_g}{\left( \int_{B_{2\epsilon}(x)} \eta_{\epsilon,x}^p v_i^p\vo_g\right)^\frac{2}{p}} 
=\frac{\int_{B_{2\epsilon}} \eta_{\epsilon,x}^2 v_i L_g v_i \vo_g + a_n\int_{B_{2\epsilon}} |\d \eta_{\epsilon,x}|^2v_i^2\vo_g
}{\left( \int_{B_{2\epsilon}(x)} \eta_{\epsilon,x}^p v_i^p\vo_g\right)^\frac{2}{p}}\\
&\leq\frac{\mu_i\int_{B_{2\epsilon}(x)} \eta_{\epsilon,x}^2 f_i^2v_i^2 \vo_g + a_n\frac{4}{\epsilon^2} \int_{B_{2\epsilon}(x)} v_i^2
}{\left( \int_{B_{2\epsilon}(x)} \eta_{\epsilon,x}^p v_i^p\vo_g\right)^\frac{2}{p}} \leq \mu_i + a_n\frac{4}{\epsilon^2} \frac{\int_{B_{2\epsilon}(x)} v_i^2\vo_g}{\left( \int_{B_{\epsilon}(x)} v_i^p\vo_g\right)^\frac{2}{p}}\\
&\leq \mu_i + a_n\frac{4}{\epsilon^2C(U)^2} \int_{B_{2\epsilon}(x)} v_i^2\vo_g
\end{align*}

where in the second last step we used the H\"older inequality to estimate the summand including $\mu_i$.
If $i$ tends to $\infty$, we obtain $Q(B_{2\epsilon}(x),g)\leq Q$ which is a contradiction to $Q(B_{2\epsilon}(x))> Q$. Thus, $\Vert v_i\Vert_{L^p(U)} \to 0$ as $i\to \infty$.

Next, let $\chi_R$ be a smooth cut-off function with $\chi_R=0$ on $B_R := B_R(z)$ for a fixed $z\in M$, $\chi_R=1$ on $M\setminus B_{2R}$  and $|\d \chi_R|\leq 2R^{-1}$. Then

\begin{align*}
Q&=\lim_{i\to\infty} \int_M v_iL_g v_i \vo_g = \lim_{i\to \infty} \left(\int_M \chi_R^2v_iL_gv_i\vo_g + \mu_i\int_M (1-\chi_R^2)f_i^2v_i^2\vo_g\right)\\
&\geq \lim_{i\to \infty} \left(\int_M \chi_R^2v_iL_gv_i\vo_g + \mu_i\int_{B_R} f_i^2v_i^2 \vo_g\right)\\
&\geq \lim_{i\to \infty} \left(\int_M \chi_Rv_iL_g(\chi_Rv_i)\vo_g -  a_n\int_M |\d \chi_R|^2v_i^2\vo_g+\mu_i\int_{B_R} f_i^2v_i^2 \vo_g\right)\\
&\geq \lim_{i\to \infty} \left(Q(M\setminus B_R,g) \Vert \chi_Rv_i\Vert_p^2 - \frac{4a_n}{R^2} \Vert v_i\Vert_{L^2(B_{2R})}^2 +\mu_i\int_{B_R} f_i^2v_i^2 \vo_g\right)\\ 
&\geq \lim_{i\to \infty} \left(Q(M\setminus B_R,g) ( \Vert v_i\Vert_p - \Vert (1-\chi_R)v_i\Vert_p)^2 - \frac{4a_n}{R^2}\Vert v_i\Vert_{L^2(B_{2R})}^2  +\mu_i\int_{B_R} f_i^2v_i^2 \vo_g\right)\\ 
&\geq \lim_{i\to \infty} \left(Q(M\setminus B_R,g) \left( \Vert v_i\Vert_p - \Vert v_i\Vert_{L^p(B_{2R})}\right)^2 - \frac{4a_n}{R^2}\Vert v_i\Vert_{L^2(B_{2R})}^2  +\mu_i\int_{B_R} f_i^2v_i^2 \vo_g\right)\\ 
\end{align*}

With $\Vert v_i\Vert_{L^s(U)}\to 0$ for $1\leq s\leq p$ on compact subsets $U\subset M$, $\Vert v_i\Vert_p\to 1$ and
\begin{align*} \int_{B_R} f_i^2v_i^2\vo_g & \leq \Vert f_i\Vert_n \Vert v_i\Vert_{L^p(B_R)}^2\leq \Vert v_i\Vert_{L^p(B_R)}^2 \to 0
\end{align*}
we obtain for all $R$ that 
\[ Q(M,g)\geq Q(M\setminus B_R,g).\]
That contradicts $\ov{Q}> Q$. Thus, $v\ne 0$.
\end{proof}

Now we can estimate

\[Q\leq \frac{\int_M vL_gv\vo_g}{\Vert v\Vert_p^2}\leq Q\frac{\int_M f^2v^2\vo_g}{\Vert v\Vert_p^2}\leq Q\frac{\Vert f\Vert_n^2 \Vert v\Vert_p^2}{\Vert v\Vert_p^2}\leq Q.\]

Hence, there is already equality. In particular, from the equality case in the used H\"older inequality we get $f^2=v^{p-2}$ and $1=\Vert f\Vert_n=\Vert v\Vert_p$. Smoothness of $v$ is obtained by standard local elliptic regularity theory. 
By the maximum principle one sees that $v$ is everywhere positive which concludes the proof of Theorem \ref{main}.\hfill $\Box$

Standard local elliptic regularity also gives that $v$ is locally in $C^{2,\alpha}$.

\begin{rem}[On the assumption on the scalar curvature]\hfill\\
In Theorem \ref{main} we assume that $\Vert (\s_g)_-\Vert_{L^{\frac{n}{2}}(g)}<\infty$. If the Yamabe invariant $Q(g)=-\infty$, this could never be true. But in general it can happen that even though $\Vert (\s_g)_-\Vert_{L^{\frac{n}{2}}(g)}=\infty$, $Q$ is finite and even positive. The easiest example is the standard hyperbolic space $\Hy^n$ which has constant negative scalar curvature, infinite volume but the Yamabe invariant of the standard sphere. From this we can even easily construct an example with finite volume: Firstly, we note that $\Vert (\s_g)_-\Vert_{L^{\frac{n}{2}}(g)}$ is scale invariant. Let us take a ball $B$ in the hyperbolic space with $\Vert (\s_g)_-\Vert_{L^{\frac{n}{2}}(g)}=1$ and then rescale it such that the rescaled ball $B_i$ has volume $i^{-2}$. If we consider the disjoint sum of the $B_i$, we obtain an example for a (disconnected) Riemannian manifold of finite volume and $\Vert (\s_g)_-\Vert_{L^{\frac{n}{2}}(g)}=\infty$.

We assume that $Q(g)=-\infty$ if and only if $\Vert (\s_g)_-\Vert_{L^{\frac{n}{2}}(\ov{g})}=\infty$ for all $\ov{g}\in [g]$. But unfortunately we still cannot prove this. Even if this is true, this alone does not help in our context since we need a complete metric of finite volume with $\Vert (\s_g)_-\Vert_{L^{\frac{n}{2}}(\ov{g})}< \infty$ which probably cannot be achieved in general.
\end{rem}

%
%

\section{On closed manifolds}

The method we used in Theorem \ref{main} for complete manifolds of finite volume allows to reprove the result on closed manifolds with positive Yamabe invariant.

\begin{thm}\label{main_closed}
Let $(M,g)$ be a closed $n$-dimensional Riemannian manifold with $0<Q<Q(S^n)$. Then, there is a smooth positive solution $v\in H_1^2$ of the Yamabe equation \eqref{EL-eq}.
\end{thm}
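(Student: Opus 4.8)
The plan is to repeat the argument of Section~\ref{proof_of_main} almost verbatim, exploiting two simplifications that compactness of $M$ provides. After rescaling $g$ we may assume $\vol(M,g)=1$. Since $M$ is closed and $Q>0$, Corollary~\ref{eig_seq} gives conformal metrics $g_i=f_i^2g$ with $f_i\in C_{>0}^\infty(M)$, $\int_Mf_i^n\vo_g=1$ and $\mu_i:=\mu(L_{g_i})\to Q$; on a closed manifold $\mu_i$ is the lowest eigenvalue of $L_{g_i}$, attained by a positive eigenfunction $\ov v_i$ with $\int_M\ov v_i^2\vo_{g_i}=1$. Rewriting the eigenvalue equation with respect to the reference metric $g$ exactly as in Section~\ref{proof_of_main}, we land in the setting
\[ L_gv_i=\mu_if_i^2v_i,\qquad \int_Mf_i^2v_i^2\vo_g=1,\qquad \int_Mf_i^n\vo_g=1,\qquad \mu_i\to Q,\qquad v_i>0. \]

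First I would record the uniform bounds. As $v_i$ is smooth on the compact $M$, it is an admissible test function for $Q$, so $Q\Vert v_i\Vert_p^2\le\int_Mv_iL_gv_i\vo_g=\mu_i$; hence $\Vert v_i\Vert_p$ is bounded and in fact $\Vert v_i\Vert_p\to1$, while the finite volume bounds $v_i$ in $L^2$. On a closed manifold $\s_g$ is bounded, so the hypothesis $\Vert(\s_g)_-\Vert_{\frac{n}{2}}<\infty$ is automatic, and the computation of Section~\ref{proof_of_main} (using $\Vert(\s_g)_-\Vert_{\frac{n}{2}}\le\Vert\s_g\Vert_\infty$, valid since $\vol(M,g)=1$) bounds $\Vert\d v_i\Vert_2$. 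Thus $v_i$ is bounded in $H_1^2$; after passing to a subsequence, $v_i\to v\ge0$ weakly in $H_1^2$ and in $L^p$, $v_i\to v$ strongly in $L^s$ for $1\le s<p$ by Rellich, and $f_i^2\to f^2$ weakly in $L^{\frac{n}{2}}$ with $\Vert f\Vert_n\le1$ by weak lower semicontinuity of the norm. Lemma~\ref{lem1} then yields the weak equation $L_gv=Qf^2v$.

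The only step that does not transfer word for word is the non-vanishing of $v$: Lemma~\ref{lem2} concludes by a comparison with $\ov Q(M,g)$, which carries no information when $M$ is closed. Fortunately the closed case is easier, and this is the point I would flag as needing attention. Suppose $v\equiv0$; then $\int_Uv_i^s\vo_g\to0$ for every compact $U$ and every $1\le s<p$. By Lemma~\ref{local_comp}, applied to the compact set $M$ with $\delta=\frac12(Q(S^n)-Q)>0$, there is $\epsilon>0$ with $Q(B_{2\epsilon}(x),g)>Q$ for all $x\in M$; covering the compact $M$ by finitely many balls $B_\epsilon(x)$ and running the cut-off estimate from the first part of the proof of Lemma~\ref{lem2} verbatim gives $\Vert v_i\Vert_{L^p(M)}\to0$, contradicting $\Vert v_i\Vert_p\to1$. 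Hence $v\not\equiv0$; the second, $\ov Q$-dependent half of the proof of Lemma~\ref{lem2} is not needed here.

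Finally, testing $L_gv=Qf^2v$ against $v\in H_1^2$ and using Hölder as in Section~\ref{proof_of_main},
\[ Q\Vert v\Vert_p^2\le\int_MvL_gv\vo_g=Q\int_Mf^2v^2\vo_g\le Q\Vert f\Vert_n^2\Vert v\Vert_p^2\le Q\Vert v\Vert_p^2, \]
so all inequalities are equalities; the equality case of Hölder forces $f^n=v^p$, hence $f^2=v^{p-2}$ and $\Vert v\Vert_p=\Vert f\Vert_n=1$. Therefore $v$ solves $L_gv=Qv^{p-1}$ with $\Vert v\Vert_p=1$, is smooth by local elliptic regularity and positive by the maximum principle. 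I do not expect a genuine obstacle: with $M$ closed the embedding $H_1^2\hookrightarrow L^p$ is compact and $\s_g$ is bounded, so the argument is strictly easier than for Theorem~\ref{main}; the only things to be careful about are replacing the $\ov Q$-based non-vanishing of Lemma~\ref{lem2} by the direct compactness argument above, and checking that the weak equation may legitimately be tested against $v$.
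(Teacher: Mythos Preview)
Your proposal is correct and follows the paper's own scheme: both proofs say that the argument of Section~\ref{proof_of_main} carries over verbatim, with only the non-vanishing step of Lemma~\ref{lem2} needing adjustment. There is a small but genuine difference in how that step is handled. The paper still runs an analogue of the \emph{second} half of Lemma~\ref{lem2}: it replaces the cut-off $\chi_R$ by $\eta_\epsilon$ (so $B_{2R}$ becomes $M\setminus B_\epsilon$ and $B_R$ becomes $M\setminus B_{2\epsilon}$), uses the first half to get $\Vert v_i\Vert_{L^p(M\setminus B_\epsilon)}\to 0$, and concludes $Q\geq Q(B_{2\epsilon},g)$, contradicting $Q<Q(S^n)$ via Lemma~\ref{local_comp}. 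You instead notice that on a closed manifold one may take $U=M$ in the \emph{first} half of Lemma~\ref{lem2}, obtaining $\Vert v_i\Vert_{L^p(M)}\to 0$ directly, which already contradicts $\Vert v_i\Vert_p\to 1$; the second half is then unnecessary. Both arguments are valid and rest on the same local estimate; yours is the more economical route.
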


\begin{proof}
The proof in the closed case is essentially the same as the one presented in Section \ref{proof_of_main}. The only little difference occurs in the proof of Lemma \ref{lem2} where the cut-off function $\chi_R$ is introduced and $Q$ is estimated. We make the following change --  we take the smooth cut-off function $\eta_\epsilon$ introduced before in Lemma \ref{lem2}. Then with the same estimate as in Lemma \ref{lem2}, where $M\setminus B_\epsilon$ substitutes  $B_{2R}$ and $M\setminus B_{2\epsilon}$ replaces $B_R$, we obtain
\begin{align*} Q&\geq \lim_{i\to\infty} \left( Q(B_{2\epsilon},g)(\Vert v_i\Vert_p-\Vert v_i\Vert_{L^p(M\setminus B_\epsilon)})^2-\frac{4a_n}{\epsilon^2}\Vert v_i\Vert_{L^2(M\setminus B_\epsilon)}+\mu_i \int_{M\setminus B_{2\epsilon}} f_i^2v_i^2\vo_g \right)\\
&=Q(B_{2\epsilon},g)
\end{align*}
For $\epsilon$ small enough this gives a contradiction to $Q(M)<Q(S^n)$ due to Lemma \ref{local_comp}.
Thus, following the rest of the proof in Section \ref{proof_of_main} we obtain that $v$ is a smooth positive solution of $L_gv=Qv^{p-1}$ with $\Vert v\Vert_p=1$. Note that on closed manifolds the condition $\Vert (\s_g)_-\Vert_{L^\frac{n}{2}(g)}<\infty$ of Theorem \ref{main} is trivially fulfilled.
\end{proof}


\bibliographystyle{acm}
\bibliography{compl+fin+Yam}

\begin{thebibliography}{10}

\bibitem{Au76}
{\sc Aubin, T.}
\newblock \'{E}quations diff\'erentielles non lin\'eaires et probl\`eme de
  {Y}amabe concernant la courbure scalaire.
\newblock {\em J. Math. Pures Appl. (9) 55}, 3 (1976), 269--296.

\bibitem{AMcO88}
{\sc Aviles, P., and McOwen, R.~C.}
\newblock Conformal deformation to constant negative scalar curvature on
  noncompact {R}iemannian manifolds.
\newblock {\em J. Differential Geom. 27}, 2 (1988), 225--239.

\bibitem{Ber}
{\sc Berard-Bergery, L.}
\newblock Scalar curvature and isometry group.
\newblock In {\em Spectra of {R}iemannian {M}anifolds}. Kagai Publications,
  Tokyo, 1983, pp.~9--28.

\bibitem{NG6}
{\sc Gro{\ss}e, N.}
\newblock The spinorial {Y}amabe equation on complete manifolds of finite
  volume.
\newblock {\em in preparation\/}.

\bibitem{NG3}
{\sc Gro{\ss}e, N.}
\newblock The {Y}amabe equation on manifolds of bounded geometry.
\newblock {\em arXiv: 0912.4398v3\/}.

\bibitem{Heb}
{\sc Hebey, E.}
\newblock {\em Sobolev spaces on {R}iemannian manifolds}, vol.~1635 of {\em
  Lecture Notes in Mathematics}.
\newblock Springer-Verlag, Berlin, 1996.

\bibitem{Jin88}
{\sc Jin, Z.~R.}
\newblock A counterexample to the {Y}amabe problem for complete noncompact
  manifolds.
\newblock In {\em Partial differential equations ({T}ianjin, 1986)}, vol.~1306
  of {\em Lecture Notes in Math.} Springer, Berlin, 1988, pp.~93--101.

\bibitem{Kim96}
{\sc Kim, S.}
\newblock Scalar curvature on noncompact complete {R}iemannian manifolds.
\newblock {\em Nonlinear Anal. 26}, 12 (1996), 1985--1993.

\bibitem{Kim00}
{\sc Kim, S.}
\newblock An obstruction to the conformal compactification of {R}iemannian
  manifolds.
\newblock {\em Proc. Amer. Math. Soc. 128}, 6 (2000), 1833--1838.

\bibitem{Sch84}
{\sc Schoen, R.}
\newblock Conformal deformation of a {R}iemannian metric to constant scalar
  curvature.
\newblock {\em J. Differential Geom. 20}, 2 (1984), 479--495.

\bibitem{Schoe}
{\sc Schoen, R., and Yau, S.-T.}
\newblock Conformally flat manifolds, {K}leinian groups and scalar curvature.
\newblock {\em Invent. Math. 92}, 1 (1988), 47--71.

\bibitem{SY}
{\sc Schoen, R., and Yau, S.-T.}
\newblock {\em Lectures on differential geometry}.
\newblock Conference Proceedings and Lecture Notes in Geometry and Topology, I.
  International Press, Cambridge, MA, 1994.
\newblock Lecture notes prepared by Wei Yue Ding, Kung Ching Chang [Gong Qing
  Zhang], Jia Qing Zhong and Yi Chao Xu, Translated from the Chinese by Ding
  and S. Y. Cheng, Preface translated from the Chinese by Kaising Tso.

\bibitem{Shu01}
{\sc Shubin, M.}
\newblock Essential self-adjointness for semi-bounded magnetic {S}chr\"odinger
  operators on non-compact manifolds.
\newblock {\em J. Funct. Anal. 186}, 1 (2001), 92--116.

\bibitem{Tr}
{\sc Trudinger, N.~S.}
\newblock Remarks concerning the conformal deformation of {R}iemannian
  structures on compact manifolds.
\newblock {\em Ann. Scuola Norm. Sup. Pisa (3) 22\/} (1968).

\bibitem{Zhang97}
{\sc Zhang, Q.~S.}
\newblock Nonlinear parabolic problems on manifolds, and a nonexistence result
  for the noncompact {Y}amabe problem.
\newblock {\em Electron. Res. Announc. Amer. Math. Soc. 3\/} (1997), 45--51
  (electronic).

\end{thebibliography}

\end{document}